\begin{document}

 \bibliographystyle{plain}
 \newtheorem{theorem}{Theorem}
 \newtheorem{lemma}[theorem]{Lemma}
 \newtheorem{corollary}[theorem]{Corollary}
 \newtheorem{problem}[theorem]{Problem}
 \newtheorem{conjecture}[theorem]{Conjecture}
 \newtheorem{definition}[theorem]{Definition}
 \newtheorem{prop}[theorem]{Proposition}
 \numberwithin{equation}{section}
 \numberwithin{theorem}{section}

 \newcommand{\mo}{~\mathrm{mod}~}
 \newcommand{\mc}{\mathcal}
 \newcommand{\rar}{\rightarrow}
 \newcommand{\Rar}{\Rightarrow}
 \newcommand{\lar}{\leftarrow}
 \newcommand{\lrar}{\leftrightarrow}
 \newcommand{\Lrar}{\Leftrightarrow}
 \newcommand{\zpz}{\mathbb{Z}/p\mathbb{Z}}
 \newcommand{\mbb}{\mathbb}
 \newcommand{\B}{\mc{B}}
 \newcommand{\cc}{\mc{C}}
 \newcommand{\D}{\mc{D}}
 \newcommand{\E}{\mc{E}}
 \newcommand{\F}{\mathbb{F}}
 \newcommand{\G}{\mc{G}}
  \newcommand{\ZG}{\Z (G)}
 \newcommand{\FN}{\F_n}
 \newcommand{\I}{\mc{I}}
 \newcommand{\J}{\mc{J}}
 \newcommand{\M}{\mc{M}}
 \newcommand{\nn}{\mc{N}}
 \newcommand{\qq}{\mc{Q}}
 \newcommand{\PP}{\mc{P}}
 \newcommand{\U}{\mc{U}}
 \newcommand{\X}{\mc{X}}
 \newcommand{\Y}{\mc{Y}}
 \newcommand{\itQ}{\mc{Q}}
 \newcommand{\C}{\mathbb{C}}
 \newcommand{\R}{\mathbb{R}}
 \newcommand{\N}{\mathbb{N}}
 \newcommand{\Q}{\mathbb{Q}}
 \newcommand{\Z}{\mathbb{Z}}
 \newcommand{\A}{\mathbb{A}}
 \newcommand{\ff}{\mathfrak F}
 \newcommand{\fb}{f_{\beta}}
 \newcommand{\fg}{f_{\gamma}}
 \newcommand{\gb}{g_{\beta}}
 \newcommand{\vphi}{\varphi}
 \newcommand{\whXq}{\widehat{X}_q(0)}
 \newcommand{\Xnn}{g_{n,N}}
 \newcommand{\lf}{\left\lfloor}
 \newcommand{\rf}{\right\rfloor}
 \newcommand{\lQx}{L_Q(x)}
 \newcommand{\lQQ}{\frac{\lQx}{Q}}
 \newcommand{\rQx}{R_Q(x)}
 \newcommand{\rQQ}{\frac{\rQx}{Q}}
 \newcommand{\elQ}{\ell_Q(\alpha )}
 \newcommand{\oa}{\overline{a}}
 \newcommand{\oI}{\overline{I}}
 \newcommand{\dx}{\text{\rm d}x}
 \newcommand{\dy}{\text{\rm d}y}
\newcommand{\cal}[1]{\mathcal{#1}}
\newcommand{\cH}{{\cal H}}
\newcommand{\diam}{\operatorname{diam}}

\parskip=0.5ex

\title[Periodic points and primes in progressions]{Group automorphisms with prescribed growth of periodic points, and small primes in arithmetic progressions in intervals}
\author{Alan~Haynes and Christopher~White}
\subjclass[2010]{11N13, 37C35}
\thanks{Research of first author supported by EPSRC grants EP/J00149X/1 and EP/L001462/1. Research of second author supported by an EPSRC Doctoral Training Grant.}
\address{School of Mathematics, University of Bristol, Bristol, UK}
\email{alan.haynes@bris.ac.uk, chris.white@bris.ac.uk}

\allowdisplaybreaks

\begin{abstract}
We investigate the question of which growth rates are possible for the number of periodic points of a compact group automorphism. Our arguments involve a modification of Linnik's Theorem, concerning small prime numbers in arithmetic progressions which lie in intervals.
\end{abstract}

\maketitle

\section{Introduction}
In this paper we investigate the problem of determining which growth rates are possible for the number of periodic points of a compact group automorphism. We begin with a brief description of the history and motivation for pursuing this line of inquiry, which turns out to be closely connected to Lehmer's Conjecture concerning Mahler measures of algebraic numbers (an excellent survey of which is \cite{Smyt2008}).

Suppose $T$ is a (continuous) automorphism of a compact group $G$, let $h(T)$ denote the topological entropy of $T$, and for each $n\in\N$ let $F_n(T)$ be the number of points of period $n$,
\[F_n(T):=\#\{g\in G:T^ng=g\}.\]
We assume that our compact groups are Hausdorff, therefore the automorphisms $T$ are homeomorphisms. As mentioned in \cite{Lind1977}, Lehmer's Conjecture is equivalent to the statement that
\[\inf \{h(T):h(T)>0\}>0,\]
where the infimum is taken over all compact group automorphisms $T$. If $G$ is a metric space and $T$ is expansive then we also have (see \cite{LindSchmWard1990})
\begin{equation}\label{eqn.entropy<->perpnts}
h(T)=\lim_{n\rar\infty}\frac{\log F_n(T)}{n}.
\end{equation}
Motivated by this observation, Ward proved in \cite{Ward2005} that for any $C>0$ there is a compact group automorphism $T$ for which the limit on the right hand side of (\ref{eqn.entropy<->perpnts}) exists and is equal to $C$. However this does not imply Lehmer's Conjecture because, although the groups $G$ in Ward's construction are metrizable, the corresponding automorphisms $T$ are not expansive and have $h(T)=0$.

Although it is a further departure from Lehmer's Conjecture, the problem of constructing compact group automorphisms with prescribed growth rates of periodic points is a natural one which is interesting in its own right. In this paper we will demonstrate how a slightly modified version of Ward's basic method, which we call the $\F$-method, can be used to prove the following theorem.
\begin{theorem}\label{thm.perpnts1}
Suppose that $r:\N\rar\R$ is a multiplicative function satisfying
\[r(p^{a})-r(p^{a-1})>20.1\cdot a\log p,\]
for all primes $p$ and for all $a\in\N$. Then there exists a compact group automorphism $T$ with
\begin{equation*}
\frac{F_{n}(T)}{\exp(r(n))}\asymp 1.
\end{equation*}
\end{theorem}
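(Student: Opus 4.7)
The plan is to realise $T$ as a product automorphism on a product group, following the $\F$-method announced in the introduction. Write $G=\prod_{p,\,a\geq 1}G_{p,a}$ and $T=\prod_{p,a}T_{p,a}$, indexed over all prime powers $p^a$; then $F_n(T)=\prod_{p,a}F_n(T_{p,a})$. The design target for each factor is
\[
F_n(T_{p,a})\asymp\begin{cases}\exp\bigl(r(p^a)-r(p^{a-1})\bigr)&\text{if }p^a\mid n,\\ 1&\text{otherwise.}\end{cases}
\]
If this holds then only $O(\log n)$ factors contribute non-trivially to $F_n(T)$, and within each prime $p$ the exponents telescope to $r(p^{v_p(n)})-r(1)$; combining across primes via the multiplicative structure of $r$ then gives $F_n(T)\asymp\exp(r(n))$.

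The concrete realisation of each $T_{p,a}$ is the natural one. Take $G_{p,a}=\Z/q\Z$ with $q=q_{p,a}$ a prime satisfying $q\equiv 1\pmod{p^a}$, and let $T_{p,a}$ be multiplication by an element of $(\Z/q\Z)^{*}$ of exact multiplicative order $p^a$. A direct computation then gives $F_n(T_{p,a})=q$ if $p^a\mid n$ and $F_n(T_{p,a})=1$ otherwise. Since each $G_{p,a}$ is finite, $G$ is a compact (metrizable, totally disconnected) group and $T$ is a continuous automorphism. The entire construction thus reduces to choosing, for every $(p,a)$, a prime $q_{p,a}\equiv 1\pmod{p^a}$ of size $\exp\bigl(r(p^a)-r(p^{a-1})\bigr)$ up to a bounded multiplicative factor.

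This prime-selection step is the main obstacle and the arithmetic core of the proof. Classical Linnik supplies the least prime in an arithmetic progression but says nothing about placement within a prescribed interval. What we require is a quantitative Linnik-in-intervals statement: for an explicit admissible constant $L$ and some fixed $C>1$, every interval $[x,Cx]$ with $\log x\geq L\,a\log p$ must contain a prime $\equiv 1\pmod{p^a}$. The hypothesis $r(p^a)-r(p^{a-1})>20.1\cdot a\log p$ is calibrated precisely to the best $L$ that such a modified Linnik theorem can deliver, which is exactly the number-theoretic content promised by the title and abstract of the paper. Once that theorem is in hand, the primes $q_{p,a}$ can be chosen within the required dyadic window; assembling the product automorphism and verifying $F_n(T)\asymp\exp(r(n))$ is then routine bookkeeping, with uniform implied constants following from the fact that only $O(\log n)$ of the factors differ from $1$.
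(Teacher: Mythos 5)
Your construction is a valid instance of the $\F$-method (it corresponds to taking $p_n=1$ for all $n$ that are not prime powers), and the idea of using a Linnik-in-intervals theorem to place the primes is exactly the arithmetic input the paper uses. However, there is a genuine gap in the reduction: your construction produces
\[
\log F_n(T)\approx\sum_{p^a\mid n}\bigl(r(p^a)-r(p^{a-1})\bigr)=\sum_{p\mid n}\bigl(r(p^{v_p(n)})-r(1)\bigr),
\]
which is an \emph{additive} expression in the prime-power components of $n$, whereas the target $r(n)$ is \emph{multiplicative}, i.e.\ $r(n)=\prod_{p\mid n}r(p^{v_p(n)})$. These are not the same. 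For instance with $r(n)=n^{21}$ (which satisfies the hypothesis) and $n=6$, your construction gives $\log F_6(T)\approx 2^{21}+3^{21}-2$, while $r(6)=6^{21}$, so the ratio $F_6(T)/\exp(r(6))$ is off by an astronomically large factor. The sentence ``combining across primes via the multiplicative structure of $r$'' is where the argument breaks: sums over prime powers cannot reproduce a product. The paper's proof sidesteps this by attaching a prime $p_d$ to \emph{every} divisor $d\mid n$ (not just prime-power divisors) and choosing $\log p_d\approx s(d)$ where $s=\mu*r$; then M\"obius inversion gives $\log F_n(T)=\sum_{d\mid n}s(d)+\text{error}=r(n)+\text{error}$, and the multiplicativity of $r$ is used only to verify the size condition $s(n)>20.1\log n$.

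A secondary gap: even if the telescoping produced the right main term, choosing each $q_{p,a}$ in a dyadic window $[x,Cx)$ introduces an $O(1)$ error in each of the $\omega(n)$ nontrivial factors, so the cumulative error in $\log F_n(T)$ is $O(\omega(n))$, not $O(1)$. This only gives $\log F_n(T)/r(n)\to 1$ (the conclusion of Theorem~\ref{thm.perpnts2}), not the stronger $F_n(T)/\exp(r(n))\asymp 1$. To get the $\asymp$ conclusion the paper must place $p_d$ in the much shorter interval $[\exp(s(d)),\exp(s(d))(1+d^{-1-\epsilon}))$ provided by Theorem~\ref{thm.primes2}, so that the per-divisor errors form a convergent series $\sum_{d\mid n}O(d^{-1-\epsilon})=O(1)$; this is precisely why the theorem needs the harder constant $20.1$ rather than the $13.4$ of the easier Bertrand-type Theorem~\ref{thm.primes1}.
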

Here and throughout the paper the notation $f(n)\asymp g(n)$ means that there exist constants $c_1,c_2>0$ such that
\[c_1|g(n)|<|f(n)|<c_2|g(n)|~\text{for all}~n\in\N.\]
With a small change to account for finitely many prime powers, Theorem \ref{thm.perpnts1} can be applied with $r(n)=Cn$ to recover Ward's result concerning the logarithmic growth rate of the number of periodic points. However if we are only interested in the logarithmic growth rate then, by arguing directly in the proof of the theorem, we can make the following improvement.
\begin{theorem}\label{thm.perpnts2}
Suppose that $r:\N\rar\R$ is a multiplicative function satisfying
\[r(p^{a})-r(p^{a-1})>13.4\cdot a\log p,\]
for all primes $p$ and for all $a\in\N$. Then there exists a compact group automorphism $T$ with
\begin{equation*}
\lim_{n\rar\infty}\frac{\log F_{n}(T)}{r(n)}= 1.
\end{equation*}
\end{theorem}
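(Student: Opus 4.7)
The plan is to run the same $\F$-method construction used to prove Theorem~\ref{thm.perpnts1}, but to exploit the weaker conclusion here---$\log F_n(T)\sim r(n)$ rather than $F_n(T)\asymp\exp(r(n))$---in order to relax the input from Linnik's theorem on primes in arithmetic progressions in intervals. In the $\F$-method one constructs $T$ as an action on a direct sum $G=\bigoplus_{p^a}(\F_{q_{p^a}})^{e_{p^a}}$ indexed by prime powers $p^a$, where each $q_{p^a}\equiv 1\pmod{p^a}$ is a prime and $T$ acts on the $p^a$-block by multiplication by an element of exact order $p^a$ in $\F_{q_{p^a}}^\ast$. A short calculation shows that $F_n(T)$ is multiplicative in $n$ and that the $p$-part of $\log F_n(T)$, for $p^a\,\|\,n$, equals $\sum_{b\le a}e_{p^b}\log q_{p^b}$. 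Matching this with $r(p^a)$ by telescoping reduces the problem, for each prime power $p^a$, to choosing $(q_{p^a},e_{p^a})$ with
\[
e_{p^a}\log q_{p^a}\approx r(p^a)-r(p^{a-1}).
\]

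In the proof of Theorem~\ref{thm.perpnts1} this approximation has to be tight enough that the multiplicative errors remain bounded when multiplied over all divisors of $n$, which is what forces the constant $20.1$. For Theorem~\ref{thm.perpnts2} it is enough to have
\[
e_{p^a}\log q_{p^a}=(1+\epsilon_{p^a})\bigl(r(p^a)-r(p^{a-1})\bigr),
\]
with $\epsilon_{p^a}\to 0$ as $a\log p\to\infty$, so that the additive errors sum to $o(r(n))$. Translated into the Linnik-type prime search, we only need a prime $q\equiv 1\pmod{p^a}$ whose logarithm falls in an interval of length $o(a\log p)$ around our target value, rather than in a window of bounded relative width. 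Inserting this weaker demand into the calculation that produced $20.1$ in Theorem~\ref{thm.perpnts1} yields the improved constant $13.4$.

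The main obstacle will be the quantitative bookkeeping: one must identify the smallest exponent $L$ for which the modified Linnik theorem guarantees a prime $q\equiv 1\pmod{p^a}$ in a sufficiently short window once $\log q>L\cdot a\log p$, then verify that under the hypothesis $r(p^a)-r(p^{a-1})>13.4\cdot a\log p$ one can indeed split the target as $e_{p^a}\log q_{p^a}$ with the required accuracy for every prime power. Once this is established, summing the resulting approximations over prime powers dividing $n$ and using the multiplicativity of $r$ gives
\[
\log F_n(T)=\sum_{p^a\,\|\,n}\bigl(r(p^a)-r(p^{a-1})\bigr)+o(r(n))=r(n)+o(r(n)),
\]
and dividing by $r(n)$ yields the desired limit.
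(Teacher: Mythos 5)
Your proposal has a genuine gap that sinks the whole argument. You index the construction by prime powers only and then claim, "using the multiplicativity of $r$," that
\[
\log F_n(T)=\sum_{p^a\|n}\bigl(r(p^a)-r(p^{a-1})\bigr)+o(r(n))=r(n)+o(r(n)).
\]
But for a multiplicative $r$ the identity $\sum_{p^a\|n}\bigl(r(p^a)-r(p^{a-1})\bigr)=r(n)$ is false; it would hold if $r$ were \emph{additive}. Multiplicativity gives $r(n)=\prod_{p^a\|n}r(p^a)$, a product, not a sum. For instance with $n=q_1q_2$ a product of two large primes, the hypothesis forces $r(q_i)\gg\log q_i$, so $r(n)=r(q_1)r(q_2)\gg(\log q_1)(\log q_2)$, while $\sum_{p\|n}(r(p)-1)=r(q_1)+r(q_2)-2\ll\log q_1+\log q_2$; the discrepancy is not $o(r(n))$. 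More structurally: by putting nontrivial blocks only at prime-power indices you make $\log F_n(T)$ an additive function of $n$, and an additive function cannot be asymptotic to a non-degenerate multiplicative one.

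The paper avoids this by attaching a prime $p_n\equiv 1\pmod n$ to \emph{every} index $n$ (not just prime powers), sized so that $\log p_n\approx s(n)$ where $s=\mu*r$ is the M\"obius transform of $r$. Then $\log F_n(T)=\sum_{d|n}\log p_d\approx\sum_{d|n}s(d)=r(n)$, which is an honest identity. Because $r$ is multiplicative, so is $s$, and $s(n)=\prod_{p^a\|n}\bigl(r(p^a)-r(p^{a-1})\bigr)$. Since every factor exceeds $13.4\,a\log p>2$, the product exceeds the sum $\sum_{p^a\|n}13.4\,a\log p=13.4\log n$, which is exactly what licenses the application of Theorem~\ref{thm.primes1} (primes in $[x,2x)$ with $x=\exp(s(n))>n^{13.4}$). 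The resulting error is $\le(\log 2)d(n)$, and since $r(n)=\sum_{d|n}s(d)\gg d(n)\log n$, this is $O(1/\log n)$ relative to $r(n)$. Your intuition about \emph{why} the constant drops from $20.1$ to $13.4$ --- the asymptotic conclusion tolerates the wider prime-search intervals of Theorem~\ref{thm.primes1} rather than the short intervals of Theorem~\ref{thm.primes2} --- is correct and matches the paper, but the construction you built around it does not deliver $\log F_n(T)\sim r(n)$.
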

These two theorems follow from the more general Theorem \ref{thm.perpnts4} below, where the assumption that $r$ be multiplicative is relaxed. However, within the context of group automorphisms constructed using the $\F$-method, some non-trivial arithmetical conditions on $r$ must be enforced. We illustrate this fact in our proof of the following theorem, which demonstrates that even very quickly growing functions $r$ can fail to be the logarithmic growth rates for the number of periodic points of any group automorphism constructed using the $\F$-method.
\begin{theorem}\label{thm.perpnts3.1}
Given any function $t:\N\rar\R$, we can find a function $r:\N\rar [0,\infty)$ with $t(n)=o(r(n))$, such that no automorphism $T$ constructed with the $\F$-method satisfies
\[\lim_{n\rar\infty} \frac{\log F_n(T)}{r(n)}=1.\]
\end{theorem}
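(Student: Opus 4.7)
My approach exploits a structural constraint satisfied by every automorphism $T$ of a compact group: if $m\mid n$, then every point fixed by $T^m$ is also fixed by $T^n$, so $F_m(T)\le F_n(T)$. Hence $\log F_n(T)$ is non-decreasing in the divisibility order, and any $r$ with $\log F_n(T)\sim r(n)$ must be essentially non-decreasing along chains $N\mid 2N$. The plan is to build $r$ that dominates $t$ but sharply violates this monotonicity along a sparse sequence of pairs $(N_k,2N_k)$.

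First choose an auxiliary $R:\N\to[0,\infty)$ with $t(n)=o(R(n))$; for instance $R(n):=(|t(n)|+1)\log(n+2)$. Inductively pick $N_1<N_2<\cdots$ so that $R(N_k)/(|t(2N_k)|+1)\to\infty$, which is possible because $R$ outgrows $n\mapsto|t(2n)|$. Set $M_k:=2N_k$ and pick $c_k$ with $k+2|t(M_k)|\le c_k\le R(N_k)/3$ (feasible for large $k$). Define
\[
r(n):=\begin{cases} c_k & \text{if } n=M_k \text{ for some } k,\\ R(n) & \text{otherwise.}\end{cases}
\]
By construction $r\ge 0$, and $t(n)=o(r(n))$ since on $n\notin\{M_k\}$ we have $r(n)=R(n)$, while on $n=M_k$ we have $c_k/|t(M_k)|\to\infty$.

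Now suppose, toward a contradiction, that some automorphism $T$ satisfies $\log F_n(T)/r(n)\to 1$. Specialising to the subsequences $n=N_k$ and $n=M_k$ gives $\log F_{N_k}(T)=(1+o(1))R(N_k)$ and $\log F_{M_k}(T)=(1+o(1))c_k$. Since $N_k\mid M_k$, monotonicity forces $(1+o(1))R(N_k)\le (1+o(1))c_k\le \tfrac{1}{3}(1+o(1))R(N_k)$, impossible for large $k$. Hence no such $T$ exists, in particular none produced by the $\F$-method.

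The argument is short because the obstruction — monotonicity of $F_n(T)$ along divisibility — is not specific to the $\F$-method but is a universal feature of compact group automorphisms. The only step requiring care is the inductive choice of the sequences $(N_k)$ and $(c_k)$: one must keep $c_k$ small enough relative to $R(N_k)$ to open a monotonicity gap, yet large enough relative to $|t(M_k)|$ to preserve $t=o(r)$, and this bookkeeping is where all the work lies.
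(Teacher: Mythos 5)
Your underlying idea is sound and actually more general than the paper's: the monotonicity $F_m(T)\le F_n(T)$ for $m\mid n$ holds for every compact group automorphism (if $T^m g=g$ and $n=km$, then $T^n g=(T^m)^k g=g$), whereas the paper exploits the specific multiplicative identity $\log F_n(T)=\sum_{d\mid n}\log p_d$ from the $\F$-method, writing $n=q_1q_2$ and using $\log F_n(T)=\log F_{q_1}(T)+\log F_{q_2}(T)+\log p_n$ to derive a contradiction. If completed, your argument would rule out all compact group automorphisms, not merely those from the $\F$-method. However, the bookkeeping that you yourself identify as ``where all the work lies'' is precisely where the proposal has two genuine gaps.

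First, the claim that $R(n):=(|t(n)|+1)\log(n+2)$ ``outgrows $n\mapsto|t(2n)|$'' is false for rapidly growing $t$. Take $t(n)=2^{2^n}$: then $R(n)/(|t(2n)|+1)\sim 2^{2^n}\log(n+2)/2^{2^{2n}}\to 0$, so no subsequence $(N_k)$ with $R(N_k)/(|t(2N_k)|+1)\to\infty$ exists, and the construction cannot even get started. The fix is easy but necessary: define instead $R(n):=(|t(n)|+|t(2n)|+1)\log(n+2)$, which still satisfies $t=o(R)$ and now trivially satisfies $R(n)/(|t(2n)|+1)\ge\log(n+2)\to\infty$. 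Second, even granting the existence of $(N_k)$, your choice $c_k\ge k+2|t(M_k)|$ does not deliver the asserted $c_k/|t(M_k)|\to\infty$: if $|t(M_k)|\to\infty$ faster than $k$, this lower bound only gives $c_k/|t(M_k)|\ge 2+o(1)$, which is not enough to conclude $t(M_k)=o(r(M_k))$. You should instead impose $c_k\ge k(|t(M_k)|+1)$, adjusting the feasibility requirement on $N_k$ accordingly (e.g., by taking $N_k$ sparse enough that $\log(N_k+2)\ge 3k$, which guarantees $R(N_k)/3\ge k(|t(2N_k)|+1)$). You should also make the $N_k$ odd so that the sets $\{N_k\}$ and $\{M_k\}=\{2N_k\}$ are disjoint and $r$ is well defined. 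With these corrections the argument goes through and yields a strictly stronger conclusion than the paper's Theorem \ref{thm.perpnts3.1}.
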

To resolve any potential confusion, we mention that this theorem contradicts a remark at the end of \cite{Ward2005}. The source of this contradiction is a small error in \cite[Equation (9)]{Ward2005}. The error is easily repaired and does not change the statement of the main result in that paper, but it does have some bearing on the first remark at the end of the paper.

Theorem \ref{thm.perpnts3.1} shows that there are logarithmic growth rates $r(n)$ which tend to infinity arbitrarily quickly, which cannot be obtained using the $\F$-method. It is also true that `polynomial' growth rates, of the form $r(n)=k\log n$, for $k\in\N$, cannot be obtained (this will be proven in Section \ref{sec.conclusion}). However it turns out that some growth rates which are only marginally faster than polynomial are attainable. To illustrate this, we will prove the following theorem.
\begin{theorem}\label{thm.perpnts3.2}
For $x>0$ and $k\in\N$, let ${^kx}$ denote the $k$th iterated exponential of $x$,
\[{^kx} := \underbrace{x^{x^{\cdot^{\cdot^{x}}}}}_k~,\]
and define the function $\iota:\N\rar\R$ by
\[\iota (n):=\min\{k\in\N:n<{^ke}\}.\]
There exists a compact group automorphism $T$ with
\begin{equation*}
\lim_{n\rar\infty}\frac{\log F_{n}(T)}{\iota(n)\log n}=1.
\end{equation*}
\end{theorem}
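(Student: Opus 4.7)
The function $r(n):=\iota(n)\log n$ is not multiplicative, and within any single range $\iota(n)=k$ it agrees with $k\log n$, so the increment $r(p^a)-r(p^{a-1})=k\log p$ fails the $13.4\cdot a\log p$ threshold of Theorem~\ref{thm.perpnts2} as soon as $a$ is large. Theorems~\ref{thm.perpnts1} and~\ref{thm.perpnts2} therefore do not apply, and my plan is to invoke the more general Theorem~\ref{thm.perpnts4}, whose $\F$-method permits non-multiplicative growth rates to be realised by selecting, for each $n$, an auxiliary prime of the right size in a prescribed arithmetic progression.

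Concretely, I would proceed as follows. For every $n$ larger than some threshold $n_0$, I would apply the modified Linnik-type theorem developed in this paper to locate a prime $q_n\equiv 1\pmod{n}$ with $\log q_n=(\iota(n)+o(1))\log n$, i.e.\ with $q_n$ lying in a short interval around $n^{\iota(n)}$. The key enabling fact is that, although $\iota(n)\to\infty$ astronomically slowly, it really does tend to infinity, so the effective exponent $\iota(n)$ eventually exceeds whichever fixed Linnik-type constant is needed in the prime-in-AP-in-interval theorem. Feeding $\{q_n\}_{n\ge n_0}$ into the $\F$-method machinery of Theorem~\ref{thm.perpnts4} then yields a compact group automorphism $T$ with $\log F_n(T)=\log q_n+O(1)=(\iota(n)+o(1))\log n$, and dividing by $\iota(n)\log n$ gives the claimed limit. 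The finitely many $n<n_0$ are accommodated by trivial padding of the construction, which does not affect the asymptotic.

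The main obstacle is the short-interval prime-in-AP step: for each $n$ one needs $q_n\equiv 1\pmod{n}$ in an interval of relative logarithmic width $o(1)$ around $n^{\iota(n)}$. The step-function character of $\iota$ is a further complication at the transitions $n\approx {}^{k-1}e$, where the target $n^{\iota(n)}$ jumps by an enormous multiplicative factor as $k\mapsto k+1$, and one must check that the interval theorem remains strong enough on both sides of each such jump. What rescues the argument is the extreme spacing of the iterated exponentials themselves: each range $[{}^{k-1}e,{}^k e)$ is itself exponentially longer than its predecessor, leaving ample logarithmic room for both the Linnik-type interval width and the $O(1)$ error of the $\F$-method to be absorbed in the asymptotic, provided only that $\iota(n)\to\infty$.
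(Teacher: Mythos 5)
Your proposal has a genuine gap rooted in a misreading of what the $\F$-method produces. Equation~(\ref{eqn.F_n formula}) gives $\log F_n(T)=\sum_{d\mid n}\log p_d$, a sum over \emph{all} divisors of $n$, not $\log p_n+O(1)$. If you choose every $p_n$ so that $\log p_n=(\iota(n)+o(1))\log n$, then for highly composite $n$ you get
\[
\log F_n(T)\approx\sum_{d\mid n}\iota(d)\log d\gg d(n)\log n,
\]
which wildly overshoots the target $\iota(n)\log n$. The only honest way to use Theorem~\ref{thm.perpnts4} is to target $\log p_n\approx s(n)=\sum_{d\mid n}\mu(d)\,r(n/d)$, and that is where the hypothesis breaks: on any range where $\iota$ is constant equal to $k$, one has $s(n)\approx k\Lambda(n)$, which vanishes whenever $n$ has at least two distinct prime factors and equals $k\log q$ for $n=q^a$. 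Neither alternative satisfies $s(n)>13.4\log n$ (and $s(n)$ is also not identically zero off prime powers once $\iota$ increments), so Theorem~\ref{thm.perpnts4} simply does not apply here. You correctly observe that $\iota(n)\to\infty$, but this does not rescue the Möbius-inversion route because the failure occurs for every $a>\iota(q^a)/13.4$ and for all non-prime-powers, not just finitely many $n$.

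The paper sidesteps this entirely by building the sequence $\{p_n\}$ by hand rather than via Theorem~\ref{thm.perpnts4}. It sets $p_d=1$ for every $d$ that is not a prime power (this single move kills the divisor-sum blow-up above), and for each prime power $q^a$ chooses $p_{q^a}$ \emph{inductively} --- either $p_{q^a}=1$, or a prime $\equiv 1\pmod{q^a}$ found via Theorem~\ref{thm.primes1} with $\kappa=13.5$ applied to modulus $q^a$ (not modulus $n$) --- so as to keep the \emph{cumulative} sum $\sum_{i\le a}\log p_{q^i}$ within $\log(cq^{a\kappa})$ of $r(q^a)$. Note the prime-existence theorem is invoked at level $q^a$, where $q^{a\kappa}$ is a fixed power, so no appeal to $\iota(n)\to\infty$ is needed to clear the Linnik threshold. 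With this in place, $\log F_n(T)=\sum_i\sum_{a\le a_i}\log p_{q_i^{a}}=\sum_i r(q_i^{a_i})+O(\log n)$ where $n=\prod_i q_i^{a_i}$, and the final step is to show $\sum_i\iota(q_i^{a_i})\log(q_i^{a_i})=(1+o(1))\,\iota(n)\log n$. This is done by splitting the prime-power factors at the threshold $\log\log n$: factors above it satisfy $|\iota(n)-\iota(q_i^{a_i})|<2$ (because the iterated exponentials grow so fast), contributing $O(\log n)$; factors below it are few and contribute $O(\iota(n)\log\log n)$ by Chebyshev. Both are $o(\iota(n)\log n)$. Your proposal contains neither the key idea of zeroing out all non-prime-power $p_d$ nor the cumulative bookkeeping that makes the prime-power contributions telescope correctly, and without them the argument does not close.
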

The $\F$-method (defined precisely in the next section) requires us to select a sequence of prime numbers $\{p_n\}_{n=1}^{\infty}$, with each $p_n=1\mo n$. In order to control the growth of the quantities $F_n(T)$ we choose the primes $p_n$ to lie in intervals whose position and length depend upon $n$ (and upon our desired value for $r(n)$). Ward was able to complete his proof (which uses a slightly different setup then ours) by appealing to Linnik's Theorem \cite{Linn1944} on primes in arithmetic progressions. Linnik's Theorem implies that there is a constant $\kappa>0$ such that, for all sufficiently large $n$, and for all $a\in\N$ with $(a,n)=1$, the smallest prime which is $a$ modulo $n$ is less than $n^\kappa$. After many subsequent improvements, Heath-Brown showed in \cite{Heat1992} that $\kappa$ can be taken to be $5.5$, and this has recently been improved by Xylouris in \cite{Xylo2009} to $5.2$. For our results we will prove the following extensions of Linnik's Theorem, which give information about small primes in arithmetic progressions which lie in intervals of prescribed lengths.
\begin{theorem}\label{thm.primes1}
If $\kappa>13.4$ then for any $\epsilon>0$, for any $a,n\in\N$ with $n$ sufficiently large (depending on $\epsilon$) and $(a,n)=1$, and for any $x>n^\kappa$, there is a prime in the interval
\[[x,(1+\epsilon)x)\]
which is equal to $a$ modulo $n$.
\end{theorem}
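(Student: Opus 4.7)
The approach is to derive Theorem~\ref{thm.primes1} from the explicit formula for the Chebyshev function $\psi(y;n,a)$ in arithmetic progressions, combined with quantitative versions of the log-free zero density estimate and the Deuring--Heilbronn phenomenon for Dirichlet $L$-functions. The starting point is the truncated identity
\begin{equation*}
\psi(y;n,a) - \frac{y}{\phi(n)} = -\frac{1}{\phi(n)}\sum_{\chi\,\mathrm{mod}\,n}\bar\chi(a)\sum_{\substack{\rho=\beta+i\gamma \\ 0<\beta<1,\ |\gamma|\le T}}\frac{y^\rho}{\rho} + O\!\left(\frac{y\log^2(ny)}{T}\right)
\end{equation*}
for a truncation parameter $T$ to be chosen. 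Taking the difference between $y=(1+\epsilon)x$ and $y=x$ yields a main term of size $\epsilon x/\phi(n)$ and a principal error term that is a sum over the non-trivial zeros of $\prod_\chi L(s,\chi)$.

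I would then bound each zero's contribution to the difference by the pair of estimates
\[\left|\frac{((1+\epsilon)x)^\rho - x^\rho}{\rho}\right| \ll \min\!\left(\frac{x^\beta}{|\gamma|+1},\ \epsilon\, x^\beta\right),\]
the first bound being trivial and the second coming from the mean value theorem applied to $t\mapsto t^\rho$ on $[x,(1+\epsilon)x]$. Splitting the zero sum at the height $|\gamma|=1/\epsilon$ and using each bound on the appropriate range reduces the problem to controlling a weighted zero count $\sum_\rho x^{\beta-1}$. This is handled via the log-free zero density estimate $N(\sigma,T,\chi)\ll (n(T+1))^{A(1-\sigma)}$ of Heath-Brown, with the sharpest known value of $A$ (essentially the Xylouris value underlying the Linnik exponent $5.2$). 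Rewriting the count as a Stieltjes integral against $N(\sigma,T)$ and substituting $u=1-\sigma$ turns the residual sum into an integral of the form $\int_0^1 (n^A/x)^u\,du$, which is $o(1)$ provided $x>n^\kappa$ for a suitable $\kappa>A$.

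The principal subtlety is the potential presence of a real Siegel zero for some exceptional real character modulo $n$. For this I would invoke the Deuring--Heilbronn repulsion phenomenon: the existence of such an exceptional zero $\beta_0$ forces every other non-trivial zero of $\prod_\chi L(s,\chi)$ into a wider zero-free region, with quantitative dependence on $1-\beta_0$, sufficient to absorb the contribution of those zeros into the main term. The Siegel-zero term itself contributes a single summand of order $x^{\beta_0}$, which is either dominated by $\epsilon x$ outright (when $\beta_0$ is not too close to $1$), or handled through the Page--Siegel lower bound for $1-\beta_0$.

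The main obstacle will be the explicit numerical bookkeeping required to extract the specific constant $13.4$. The structural outline is classical Linnik; what requires care is simultaneously optimizing the truncation parameter $T$, the exponent $A$ in the zero-density estimate, and the loss incurred when invoking Deuring--Heilbronn, together with the fact that the short-interval main term $\epsilon x/\phi(n)$ is weaker than the full $x/\phi(n)$ available in the classical Linnik setting by a multiplicative factor of $\epsilon$ (harmless once $\epsilon$ is fixed, but relevant in tuning the implied constants). Balancing these parameters against each other is what I expect to dictate the numerical value $\kappa>13.4$.
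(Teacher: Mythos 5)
Your skeleton is right — explicit formula, split on the presence of a Siegel zero, zero-density estimates, and Deuring--Heilbronn repulsion — and it matches the paper's structure. But there is a genuine gap in the key estimate. You claim that after a Stieltjes integration by parts, $\sum_\rho x^{\beta-1}$ becomes an integral of the form $\int_0^1 (n^A/x)^u\,du$ that is $o(1)$ once $\kappa>A$. You have dropped the factor of $\log x$ coming from the integration by parts: $-\int_0^1 x^{\alpha-1}\,dN(\alpha,T) = x^{-1}N(0,T) + \log x\int_0^1 x^{\alpha-1}N(\alpha,T)\,d\alpha$. With $x=n^\kappa$ and $T$ a power of $n$, the resulting quantity is $\asymp \kappa/(\kappa - A(1+\delta))$, a bounded constant that is \emph{not} $o(1)$ and is in general \emph{not} less than $1$. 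The density estimate alone cannot close this. What you need, and what the paper uses, is a classical zero-free region of width $\eta = c_3/\log(n(2+T))$ (Heath-Brown's $c_3 = 0.10367$) to truncate the integral at $1-\eta$, which multiplies the constant above by $n^{\eta(A(1+\delta)-\kappa)}$ and thereby pushes it below $1$. This zero-free region is needed in the \emph{generic} (non-Siegel-zero) case, not merely the exceptional one; you only mention Deuring--Heilbronn, which addresses the exceptional case.

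A secondary mismatch: you propose a log-free zero-density estimate (Heath-Brown/Xylouris), but the paper instead uses Huxley's density theorem to get the exponent $12/5 + \epsilon_2$, absorbing the log factors into $\epsilon_2$ at the cost of requiring $n \ge n_0(\epsilon_2)$. This is exactly what is permissible here (the theorem only asserts a conclusion for $n$ sufficiently large, and $x = n^\kappa$ is a fixed power of $n$), and it is essential to the numerics: the log-free exponents known unconditionally are considerably worse than $12/5$, so your route would yield a significantly larger constant. The $13.4$ arises specifically from combining Huxley's $12/5$, Heath-Brown's $c_3=0.10367$, and a truncation height $T\approx n^{1+}$, via the inequality $\frac{\kappa}{\kappa-24/5}\cdot\exp(c_3(12/5-\kappa/2))<1$ in the non-exceptional case; the Siegel-zero case (where Siegel's theorem gives an effective-in-$n$ but ineffective-in-constants lower bound on $\beta_0$) turns out to require only $\kappa>39/5$ and is not the bottleneck.
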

Theorem \ref{thm.primes1} is a version of Bertrand's Postulate for primes in arithmetic progressions. The next theorem is similar, but with smaller intervals.
\begin{theorem}\label{thm.primes2}
If $\kappa>20.1$ then there is an $\epsilon\ge 0$ such that, for any $a,n\in\N$ with $n$ sufficiently large and $(a,n)=1$, and for any $x>n^\kappa$, there is a prime in the interval
\[\left[x,x+\frac{x}{n^{1+\epsilon}}\right)\]
which is equal to $a$ modulo $n$.
\end{theorem}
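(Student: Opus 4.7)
My plan is to prove Theorem \ref{thm.primes2} by establishing a short-interval asymptotic for $\psi(x+h;n,a)-\psi(x;n,a)$, where $h=x/n^{1+\epsilon}$, and $\psi(y;n,a)=\sum_{m\le y,\,m\equiv a\,(n)}\Lambda(m)$. Using orthogonality of Dirichlet characters modulo $n$, the problem reduces to showing
\[
\sum_{\chi\ne\chi_0}\overline{\chi}(a)\sum_{\rho}\frac{(x+h)^\rho-x^\rho}{\rho}=o\!\left(\frac{h}{\phi(n)}\right),
\]
where the inner sum runs over the non-trivial zeros $\rho=\beta+i\gamma$ of $L(s,\chi)$ with $|\gamma|\le T$ for a suitable truncation parameter $T$. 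The key elementary inequality
\[
\bigl|(x+h)^\rho-x^\rho\bigr|\;\ll\;h\,x^{\beta-1}\min\!\left(1,\tfrac{x}{h|\gamma|}\right)
\]
is what makes the short-interval save profitable: for each zero we gain a factor $h\,x^{\beta-1}$ rather than $x^\beta/|\gamma|$.

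Next I will input the strongest available log-free zero density estimate in the Linnik range, namely Xylouris' refinement of Heath-Brown, in the form $\sum_{\chi\bmod n}N(\alpha,T,\chi)\ll (nT)^{A(1-\alpha)}$ for an explicit constant $A$. Partitioning zeros by $1-\beta$ and $|\gamma|$ and summing dyadically, the contribution is bounded by a quantity of the shape $h\,x^{-\eta}(nT)^{A\eta}(\log nT)^{O(1)}$ for each relevant $\eta=1-\beta$. For this to be $o(h/\phi(n))$, we require $x>n^\kappa$ with $\kappa$ exceeding a specific combination of $A$ and the density exponent; balancing these parameters, together with the $1/n^{1+\epsilon}$ weight provided by the short-interval window, will yield the numerical threshold $\kappa>20.1$ (as opposed to the value $13.4$ available for the Bertrand-type range $h=\epsilon x$ used in Theorem \ref{thm.primes1}).

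The main obstacle, as always with Linnik-type results, is the potential exceptional Siegel zero $\beta_0$ of a real character $\chi_1\bmod n$. Here I will invoke the Deuring--Heilbronn phenomenon: if $1-\beta_0=\lambda/\log n$ with $\lambda$ small, then every other zero of every $L(s,\chi)$ modulo $n$ satisfies $1-\beta\gg\log(1/\lambda)/\log(n(|\gamma|+2))$. Consequently the repulsion effect allows the same density estimate to be applied with an improved effective $A$, and the contribution of $\beta_0$ itself is absorbed into the main term since it has a fixed sign that only strengthens the conclusion. Siegel's theorem controls $\lambda$ ineffectively but sufficiently for our qualitative ``$n$ sufficiently large'' statement. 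The genuinely hard part of the argument is thus not any single step but the careful bookkeeping needed to push the final exponent down to $20.1$: this requires choosing the truncation $T$, the density-estimate parameters, and the Deuring--Heilbronn repulsion constant in a coordinated way, then verifying that the resulting two-parameter optimization (over $\alpha$ and $|\gamma|$) achieves the stated constant with some $\epsilon\ge 0$ in the interval length $x/n^{1+\epsilon}$.
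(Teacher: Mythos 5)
Your high-level architecture — explicit formula truncated at height $T$, orthogonality over characters, zero-density input, a Deuring--Heilbronn repulsion estimate, and a Siegel-zero case split backed by Siegel's theorem — matches what the paper does, and the elementary bound $|(x+h)^\rho - x^\rho|\le h x^{\beta-1}\min(1,x/(h|\gamma|))$ is a legitimate (slightly sharper) replacement for the paper's cruder $\le hx^{\beta-1}$ plus a separate $O(x(\log nx)^2/T)$ truncation error. Two factual points: the zero-density estimate the argument actually lives on is Huxley's, with the exponent $12/5$ in $N_n(\alpha,T)\le(nT)^{(12/5+\epsilon_2)(1-\alpha)}$ for large $n$; Xylouris's work sharpens Linnik's constant by improving zero-free/repulsion constants, not by proving a new density theorem, so citing it here is misplaced. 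The explicit constant $20.1$ then comes out of the inequality $\frac{\kappa}{\kappa-(3+\epsilon)\cdot 12/5}\exp\bigl(c_3(12/5-\kappa/(3+\epsilon))\bigr)<1$ at $\epsilon=0$, with $c_3=0.10367$ from Heath-Brown — so the $\delta$ (roughly $3$, versus roughly $1$ in Theorem~\ref{thm.primes1}) is the knob that forces $\kappa$ up from $13.4$ to $20.1$.

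There is, however, a genuine error in your treatment of the exceptional zero. You assert that the contribution of $\beta_0$ \emph{``has a fixed sign that only strengthens the conclusion.''} That is false. The exceptional character $\chi_1$ is real, so its contribution to $\psi(x+h,a,n)-\psi(x,a,n)$ via orthogonality is
\[
-\frac{\chi_1(a)}{\varphi(n)}\cdot\frac{(x+h)^{\beta_0}-x^{\beta_0}}{\beta_0},
\]
which is negative precisely when $\chi_1(a)=1$. In that case the effective main term degrades from $h/\varphi(n)$ to roughly $\bigl(1-x^{\beta_0-1}\bigr)h/\varphi(n)\sim \kappa(1-\beta_0)(\log n)\,h/\varphi(n)$, which can be arbitrarily small relative to $h/\varphi(n)$. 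It is exactly this shrinkage that forces one to (a) use Siegel's theorem to get an (ineffective) lower bound on $1-\beta_0$ of size $\gg n^{-\epsilon_1}\!/\log n$, and (b) use Deuring--Heilbronn repulsion to push all \emph{other} zeros far enough left that their contribution is $o$ of the already-shrunken main term, i.e.\ $o\bigl((1-\beta_0)\log n\cdot h/\varphi(n)\bigr)$ rather than merely $o(h/\varphi(n))$. Your sketch calls on both of those tools but for the wrong reason; as written, the proposal would let you skip the genuinely delicate case, so it does not yet constitute a proof.
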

We remark that similar problems were considered in \cite{Foge1965} and \cite{Welc2007}, however our results are stronger. For example, in \cite[Theorem 6.3.1]{Welc2007} (see also \cite[Theorem 4.1]{Welc2008}) it is proven that for $n$ sufficiently large, if $(a,n)=1$ and if $\kappa=328$ and $\theta=655/656$, then there is a prime number in the interval
\[\left[n^\kappa,n^\kappa+n^{\kappa\theta}\right)\]
which is $a\mo n$. It is not difficult to deduce from our Theorem \ref{thm.primes2} that this can be improved to $\kappa=20.1$ and $\theta=24/25$, and in fact, by our proof, better constants than this could also be obtained.

If we assume the Generalized Riemann Hypothesis then it is relatively easy to show that the constant $\kappa$ in Theorem \ref{thm.primes1} could be taken to be any real number larger than $2$, while the constant $\kappa$ in Theorem \ref{thm.primes2} could be taken to be any real number larger than $4$. However we do not make this assumption.

The layout for this paper is as follows. In Section \ref{sec.prelims} we define the $\F$-method and prove Theorems \ref{thm.perpnts1} and \ref{thm.perpnts2}, assuming the truth of Theorems \ref{thm.primes1} and \ref{thm.primes2}. In Section \ref{sec.primesthms} we prove the latter two theorems, and in Section \ref{sec.conclusion} we prove Theorems \ref{thm.perpnts3.1} and \ref{thm.perpnts3.2}.

Our notation and conventions are: $\varphi$ denotes the Euler phi function, $\mu$ the M\"{o}bius function, $\Lambda$ the von Mangoldt function. We use $d(n)$ to denote the number of positive divisors of an integer $n$, $\omega (n)$ to denote the number of distinct prime divisors of $n$, and $(m,n)$ to denote greatest common divisor of $m,n\in\Z$.  All summations are assumed to be restricted to positive integers. If $p$ is prime and $a\in\N$ then the notation $p^a\|n$ means that $p^a|n$ but $p^{a+1}\nmid n$. If $f$ and $g$ are complex valued functions with domain $X$ then the notation $f(x)\ll g(x)$, or $f(x)=O(g(x))$, means that there exists a constant $c>0$ such that $|f(x)|\le c|g(x)|$ for all $x\in X$. If $X=\N,\Z$, or $\R$ then the notation $f(x)=o(g(x))$ means that $f(x)/g(x)\rar 0$ as $x\rar\infty$. We say that a function $f:\N\rar\C$ is multiplicative if $f(mn)=f(m)f(n)$, for all $m,n\in\N$ with $(m,n)=1$.

\section{The $\F$-method and proofs of Theorems \ref{thm.perpnts1} and \ref{thm.perpnts2}}\label{sec.prelims}
Following Ward's construction, we consider sequences of pairs of integers $\{(p_n, g_n)\}_{n=1}^{\infty}$ satisfying the following properties:
\begin{enumerate}
 \item[(i)] Each $p_n$ is either prime or equal to $1$, and $p_1=1$,
 \item[(ii)] Each $p_n$ satisfies $p_n=1\mo n$,
 \item[(iii)] If $p_n=1$ then $g_n=0$, otherwise $g_n$ is a primitive root $\mo p_n$.
\end{enumerate}
Formally, we define the $\F$-method to be a function from the collection of all such sequences to the collection of pairs $(G,T)$ where $G$ is a compact group and $T$ is an automorphism of $G$. The group $G$ associated to the sequence $\{(p_n,g_n)\}$ is
\[G=\prod_{n=1}^{\infty}\F_{p_n},\]
the direct product (with the product topology) of the additive groups of the finite fields $\F_{p_n}$, each taken with the discrete topology. When $p_n=1$ we adopt the convention that $\F_{p_n}$ is the group with one element. The automorphism $T$ is defined by
\[T(x_1,x_2,\ldots,x_n,\ldots)=(T_1(x_1),T_2(x_2),\ldots,T_n(x_n),\ldots),\]
where $T_n:\F_{p_n}\rar\F_{p_n}$ is the trivial automorphism when $p_n=1,$ and is otherwise defined by
\[T_n(x_n)=g_n^{(p_n-1)/n}x_n.\]
If a group-automorphism pair $(G,T)$ lies in the image of the $\F$-method then we say that it can be constructed (or simply that $T$ can be constructed) using the $\F$-method.

Now assume that $(G,T)$ is the image under the $\F$-method of the sequence $\{(p_n,g_n)\}$. For any $m\in\N$, if $x_m\in\F_{p_m}$ is not the additive identity then the least period of $x_m$ under the map $T_m$ is equal to $m$. It follows that, for each $n\in\N$,
\begin{align}
F_n(T)&=\#\left\{\mathbf{x}\in G: \mathrm{lcm}\{m:p_m\not=1,~x_m\in\F_{p_m}^*\}|n\right\}=\prod_{d|n}p_d.\label{eqn.F_n formula}
\end{align}
Here we remark that the reason for allowing some of the integers $p_n$ to be $1$ is to give us the flexibility to slow the growth of $F_n(T)$ along certain subsequences of $n$. We will return to this thought in the final section. Now we use equation (\ref{eqn.F_n formula}) to prove the following slightly more general version of Theorems \ref{thm.perpnts1} and \ref{thm.perpnts2}, assuming for the moment the validity of Theorems \ref{thm.primes1} and \ref{thm.primes2}.
\begin{theorem}\label{thm.perpnts4}
Given $r:\N\rar\R$, let $s:\N\rar\R$ be defined by
\begin{equation}\label{eqn.s defn}
s(n):=\sum_{d|n}\mu (d)r(n/d).
\end{equation}
Then:
\begin{enumerate}
\item[(i)] If for all sufficiently large $n$ with $s(n)\not=0$ we have that
\[s(n)>20.1\cdot \log n,\]
then there exists a compact group automorphism $T$ with
\begin{equation*}
\frac{F_{n}(T)}{\exp(r(n))}\asymp 1.
\end{equation*}
\item[(ii)] If for all sufficiently large $n$ we have that
\[s(n)>13.4\cdot \log n,\]
then there exists a compact group automorphism $T$ with
\begin{equation*}
\lim_{n\rar\infty}\frac{\log F_{n}(T)}{r(n)}=1.
\end{equation*}
\end{enumerate}
\end{theorem}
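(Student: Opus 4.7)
The plan is to use the $\F$-method with a sequence $\{(p_n, g_n)\}$ designed so that $\log p_n$ tracks $s(n)$ term by term. By equation (\ref{eqn.F_n formula}) and M\"obius inversion of (\ref{eqn.s defn}),
\[\log F_n(T) = \sum_{d|n} \log p_d \qquad \text{and} \qquad r(n) = \sum_{d|n} s(d),\]
so if the error $|\log p_d - s(d)|$ can be controlled divisor-by-divisor in a suitably summable fashion, both parts of the theorem follow by comparison of these two identities.

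For part (i), whenever $s(n) = 0$ I take $p_n = 1$ and $g_n = 0$, which condition (iii) of the $\F$-method permits and which contributes $\log p_n = s(n) = 0$ exactly. For the remaining sufficiently large $n$, the hypothesis $s(n) > 20.1 \log n$ yields $\exp(s(n)) > n^{\kappa}$ for some fixed $\kappa > 20.1$ (if the margin is genuinely shrinking, I would absorb this by replacing $20.1$ with $20.1 + \delta$ inside the proof, which does not affect the stated constant after relabelling). Applying Theorem \ref{thm.primes2} with $a = 1$ and $x = \exp(s(n))$ produces a prime $p_n \equiv 1 \mo n$ with
\[\exp(s(n)) \le p_n < \exp(s(n))\left(1 + n^{-(1+\epsilon)}\right),\]
and so $0 \le \log p_n - s(n) \le n^{-(1+\epsilon)}$. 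Selecting any primitive root $g_n \mo p_n$ completes the construction. Summing over divisors,
\[\bigl|\log F_n(T) - r(n)\bigr| \le \sum_{d|n} |\log p_d - s(d)| \le O(1) + \sum_{d=1}^{\infty} d^{-(1+\epsilon)} = O(1),\]
which gives $F_n(T) \asymp \exp(r(n))$.

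For part (ii), I would run the same template with Theorem \ref{thm.primes1} in place of Theorem \ref{thm.primes2}: from $s(n) > 13.4 \log n$ we obtain a prime $p_n \equiv 1 \mo n$ in $[\exp(s(n)), (1+\epsilon)\exp(s(n)))$, giving $\log p_n = s(n) + O(1)$. Summing, $\log F_n(T) = r(n) + O(d(n))$. Since $d(n) = n^{o(1)}$ while $r(n) \ge s(n) - O(1) \ge (13.4 - o(1))\log n$, the error is $o(r(n))$, and hence $\log F_n(T)/r(n) \to 1$.

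The main obstacle I expect is the mild boundary issue at the thresholds $20.1$ and $13.4$: the strict inequality in the hypothesis does not by itself guarantee a \emph{fixed} admissible $\kappa$ in the prime-distribution theorems, so some care is needed to line up the hypothesised slack with an applicable $\kappa$. The remaining subtleties -- absorbing finitely many small exceptional $n$ into an $O(1)$ error, checking that $\sum d^{-(1+\epsilon)}$ converges, and noting that primitive roots $g_n$ exist for every prime $p_n$ -- are then routine.
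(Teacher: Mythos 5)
Your part (i) argument matches the paper's essentially verbatim, and your instinct about the boundary issue at $\kappa = 20.1$ (resp.\ $13.4$) is a reasonable point to flag, though it is cosmetic and easily fixed. The genuine problem is in part (ii).

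You write that the error from $\sum_{d|n}\log(1+\xi_d)$ is $O(d(n))$ and then argue it is $o(r(n))$ because ``$d(n) = n^{o(1)}$ while $r(n) \ge (13.4-o(1))\log n$.'' This reasoning does not work: $d(n) = n^{o(1)}$ certainly does not imply $d(n) = o(\log n)$. In fact $d(n)$ exceeds $\exp\bigl(\tfrac{\log n}{2\log\log n}\bigr)$ infinitely often, which dwarfs every fixed power of $\log n$ --- the paper itself makes precisely this observation in Section \ref{sec.conclusion} when discussing the limits of the $\F$-method. So a lower bound of the form $r(n) \gg \log n$ is far too weak to absorb an $O(d(n))$ error. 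The correct observation, which is what makes the proof go through, is that the hypothesis $s(d) > 13.4\log d$ holds for \emph{every} large divisor $d$ of $n$, not just $d=n$, so summing over divisors gives
\[
r(n) \;=\; \sum_{d|n} s(d) \;\gg\; \sum_{d|n} \log d \;=\; \frac{d(n)\log n}{2},
\]
where the last identity comes from pairing $d$ with $n/d$. With $r(n) \gg d(n)\log n$, the error $O(d(n))$ is $O(r(n)/\log n) = o(r(n))$, and the limit follows. You need this stronger lower bound on $r(n)$; bounding $r(n)$ from below by the single term $s(n)$ is not enough.
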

\begin{proof}
To prove (i), choose $\epsilon>0$. By Theorem \ref{thm.primes2}, for every sufficiently large $n$ for which $s(n)\not= 0$, we can find a prime number $p_n=1\mo n$ which lies in the interval
\[\left[\exp(s(n)),\exp(s(n))\left(1+\frac{1}{n^{1+\epsilon}}\right)\right).\]
Then using (\ref{eqn.F_n formula}) we have that
\begin{align}
\log F_n(T)&=\sum_{d|n}s(d)+\sum_{d|n}\log\left(1+O\left(\frac{1}{d^{1+\epsilon}}\right)\right)\nonumber\\
&=r(n)+\sum_{d|n}\log\left(1+O\left(\frac{1}{d^{1+\epsilon}}\right)\right),\label{eqn.primesprf1}
\end{align}
where the last equality follows by M\"{o}bius inversion in (\ref{eqn.s defn}). For the error term we have that
\begin{align*}
\sum_{d|n}\log\left(1+O\left(\frac{1}{d^{1+\epsilon}}\right)\right)&\ll\sum_{d|n}\frac{1}{d^{1+\epsilon}}\ll 1,
\end{align*}
so exponentiating both sides of (\ref{eqn.primesprf1}) finishes the proof.

The proof of (ii) is almost the same, except that we appeal to Theorem \ref{thm.primes1} to choose, for all sufficiently large $n$, a prime $p_n=1\mo n$ in the interval
\[\left[\exp(s(n)),2\exp (s(n))\right).\]
Then for each $n$ we have that
\begin{align*}
\log F_n(T)&=r(n)+\sum_{d|n}\log (1+\xi_d),
\end{align*}
where each $\xi_d$ is a real number in the interval $[0,1)$. The error term here is less than $(\log 2)d(n)$, and since
\[r(n)=\sum_{d|n}s(d)\gg\sum_{d|n}\log d=\frac{d(n)\log n}{2},\]
we have that
\[\frac{\log F_n(T)}{r(n)}=1+O\left(\frac{1}{\log n}\right).\]
\end{proof}
Finally, to derive Theorems \ref{thm.perpnts1} and \ref{thm.perpnts2} from Theorem \ref{thm.perpnts4}, suppose that $\kappa>4$ and that $r(n)$ is multiplicative function which satisfies
\begin{equation}\label{eqn.primesprf2}
r(p^{a})-r(p^{a-1})>\kappa a\log p,
\end{equation}
for all primes $p$ and for all $a\in\N$. Then we have that
\[s(n)=\prod_{p^a\|n}(r(p^{a})-r(p^{a-1}))>\prod_{p^a\|n}\kappa a\log p.\]
Since $\kappa a\log p>2$ for all $p$ and $a$, the product on the right hand side is greater than
\[\sum_{p^a\|n}\kappa a\log p=\kappa\log n.\]
Substituting $\kappa=20.1$ and $\kappa=13.4$ and appealing to Theorem \ref{thm.perpnts4} finishes the proofs.

\section{Proofs of Theorems \ref{thm.primes1} and \ref{thm.primes2}}\label{sec.primesthms}
In this section we assume familiarity with the basic tools of analytic number theory, as presented for example in \cite{Dave1967}. Our proofs of Theorems \ref{thm.primes1} and \ref{thm.primes2} are based on deep but well known results which describe the distribution of zeros of Dirichlet L-functions in the critical strip. Our approach is modeled after the proofs of \cite[Theorem 7]{Gall1970} and \cite[Theorem 2]{Gall1972}.

To begin, let $n,a\in\N$ satisfy $(a,n)=1$ and for $x>0$ let
\[\psi (x,a,n)=\sum_{\substack{m\le x\\m=a~\mathrm{mod}~n}}\Lambda (m).\]
If $\chi$ is a Dirichlet character to modulus $n$ then define
\[\psi(x,\chi)=\sum_{m\le x}\chi (m)\Lambda (m),\]
and note by the orthogonality relations that
\begin{equation}\label{eqn.orth rels}
\psi(x,a,n)=\frac{1}{\varphi (n)}\sum_{\chi~\mathrm{mod}~n}\overline{\chi} (a)\psi(x,\chi).
\end{equation}
The approximate explicit formula for $\psi (x,\chi)$ (see \cite[Section 19]{Dave1967}) states that, for $2\le T\le x$,
\[\psi (x,\chi)=\delta_{\chi_0}x-\sum_{|\gamma|\le T}\frac{x^\rho}{\rho}+O\left(\frac{x(\log nx)^2}{T}+x^{1/4}\log x\right),\]
where $\delta_{\chi_0}$ is $1$ if $\chi$ is the principal character and $0$ otherwise. The sum on the right hand side here is a sum over all non-trivial zeros $\rho=\beta+i\gamma$ of the Dirichlet L-function $L(s,\chi)$, which have $|\gamma|\le T$. Substituting this in (\ref{eqn.orth rels}) we have for $0<h\le x$ that
\begin{align}
&\psi(x+h,a,n)-\psi(x,a,n)\nonumber\\
&\quad=\frac{h}{\varphi (n)}-\frac{1}{\varphi (n)}\sum_{\chi~\mathrm{mod}~n}\overline{\chi} (a)\sum_{|\gamma|\le T}\frac{(x+h)^\rho-x^\rho}{\rho}\nonumber\\
&\quad+O\left(\frac{x(\log nx)^2}{T}+x^{1/4}\log x\right).\label{eqn.psi diff err term 1}
\end{align}
For the double sum here we have the bound
\begin{align*}
\left|\sum_{\chi~\mathrm{mod}~n}\overline{\chi} (a)\sum_{|\gamma|\le T}\frac{(x+h)^\rho-x^\rho}{\rho}\right|&\le \sum_{\chi~\mathrm{mod}~n}\sum_{|\gamma|\le T}\left|\int_x^{x+h}y^{\rho-1}dy\right|\\
&\le h\sum_{\chi~\mathrm{mod}~n}\sum_{|\gamma|\le T}x^{\beta-1}.
\end{align*}
The main goal of the rest of the proof will be to show that for our choices of $x,h,$ we can choose $T$, depending on $n$, so that
\[\sum_{\chi~\mathrm{mod}~n}\sum_{|\gamma|\le T}x^{\beta-1}<1.\]
The other error terms involved will all be asymptotically negligible as $n\rar\infty$, which will imply that
\[\psi(x+h,a,n)-\psi (x,a,n)>0,\]
for all large enough $n$, and the conclusions of the theorems will easily follow. The proof is split into two cases, depending on whether or not there is a Siegel zero corresponding to a Dirichlet character of modulus $n$.\vspace*{.1in}

\noindent {\it Case 1:} Suppose that $n$ is sufficiently large and that $1-\beta_0/\log n$ is a Siegel zero to modulus $n$. We know from Siegel's theorem that, for any $\epsilon_1>0$,
\begin{equation}\label{eqn.sieg thm}
\beta_0\gg_{\epsilon_1} n^{-\epsilon_1}.
\end{equation}
All other zeros of Dirichlet L-functions to modulus $n$ must satisfy
\[\beta<1-\frac{c_2(\log\beta_0^{-1})}{\log (n(2+|\gamma|))},\]
and the constant $c_2$ may be taken to be any real number less than $2/3$ (see \cite{Grah1977} or \cite[p.267]{Heat1992}). Therefore, writing
\begin{equation}\label{eqn.eta def, sieg zero case}
\eta =\frac{c_2(\log\beta_0^{-1})}{\log (n(2+T))},
\end{equation}
and denoting the sum over non-Siegel zeros with a prime, we have that
\begin{align}
\sum_{\chi~\mathrm{mod}~n}\sideset{}{'}\sum_{|\gamma|\le T}x^{\beta-1}&=-\int_0^{1-\eta}x^{\alpha-1}d_\alpha N_n(\alpha,T)\nonumber\\
&=\frac{N_n(0,T)}{x}+\int_0^{1-\eta}x^{\alpha-1}N_n(\alpha,T)\log x~d\alpha,\label{eqn.non sieg zero int 1}
\end{align}
where $N_n(\alpha,T)$ denotes the number of zeros of all L-functions of modulus $n$ with $\alpha<\beta<1$ and $|\gamma|\le T$.
A zero-density estimate of Huxley \cite{Huxl1974} implies that, for any $\epsilon_2>0$, there exists an integer $n_0=n_0(\epsilon_2)$ such that, for all $T\ge 2$,
\begin{equation*}
N_n(\alpha, T)\le (nT)^{(12/5+\epsilon_2)(1-\alpha)}~\text{for all}~n\ge n_0.
\end{equation*}
For $\alpha\le 7/12$ it is better to use the standard 
estimate that, assuming $n_0$ has been chosen large enough,
\begin{equation*}
N_n(\alpha, T)\le (nT)^{1+\epsilon_2}~\text{for all}~n\ge n_0.
\end{equation*}
For the rest of the proof we will assume that $\epsilon_2$ is a fixed small number, to be specified later, and that $n\ge n_0$. Suppose that $\kappa$ and $\delta$ are real numbers satisfying $\delta>1$ and
\begin{equation}\label{eqn.kappa delta reln}
\kappa>(1+\delta)(12/5+\epsilon_2),
\end{equation}
and set $x=n^\kappa$ and $T=n^\delta-2$. Then we have that
\begin{align*}
&\int_0^{1-\eta}x^{\alpha-1}N_n(\alpha,T)\log x~d\alpha\\
&\qquad\le\kappa\int_0^{1-\eta}n^{(1-\alpha)((1+\delta)(12/5+\epsilon_2)-\kappa)}\log n~d\alpha\\
&\qquad\le\frac{\kappa}{\kappa-(1+\delta)(12/5+\epsilon_2)}\cdot n^{\eta((1+\delta)(12/5+\epsilon_2)-\kappa)}
\end{align*}
For the other term appearing in (\ref{eqn.non sieg zero int 1}) we have that
\[\frac{N_n(0,T)}{x}\le n^{(1+\delta)(1+\epsilon_2)-\kappa},\]
and this tends to $0$ as $n\rar\infty$.
For the secondary error term appearing in (\ref{eqn.psi diff err term 1}) we have that
\[\frac{x(\log nx)^2}{T}+x^{1/4}\log x\ll n^{\kappa-\delta}(\log n)^2+n^{\kappa/4},\]
and this will be $o(h\beta_0^2/\varphi (n))$ provided that
\begin{equation}\label{eqn.h bound}
n^{1+\kappa-\delta}(\log n)^2+n^{1+\kappa/4}=o(h\beta_0^2)~\text{ as }~n\rar\infty.
\end{equation}
Assuming that $h$ is chosen so that (\ref{eqn.h bound}) is satisfied, we have that
\begin{align*}
&\psi(x+h,a,n)-\psi (x,a,n)\\
&\quad\ge \frac{h}{\varphi (n)}\left(1-\frac{\kappa}{\kappa-(1+\delta)(12/5+\epsilon_2)}\cdot n^{\eta((1+\delta)(12/5+\epsilon_2)-\kappa)}\right.\\
&\quad\qquad\qquad\qquad\left.\phantom{\frac{1}{2}}-n^{-\kappa\beta_0/\log n}+o(\beta_0^2)\right).
\end{align*}
Now what we are going to show is that, for choices of $x$ and $h$ corresponding to each of our theorems, we can choose $\delta$ so that (\ref{eqn.kappa delta reln}) and (\ref{eqn.h bound}) are satisfied and such that
\begin{align}
1-\frac{\kappa}{\kappa-(1+\delta)(12/5+\epsilon_2)}\cdot n^{\eta((1+\delta)(12/5+\epsilon_2)-\kappa)}-n^{-\kappa\beta_0/\log n}\sim\kappa\beta_0,\label{eqn.sieg zero greater 0 bnd}
\end{align}
as $n\rar\infty$.

To prove Theorem \ref{thm.primes1} (in the case where there is a Siegel zero) take $h=\epsilon n^\kappa$. In this case we suppose that $\kappa>24/5$, let $\delta$ be any number satisfying
\begin{equation}\label{eqn.kappa delta reln 2}
\kappa>(1+\delta)\cdot 12/5,
\end{equation}
and then choose $\epsilon_1$ in (\ref{eqn.sieg thm}) so that (\ref{eqn.h bound}) holds. We will show that $\delta$ can be chosen so that (\ref{eqn.sieg zero greater 0 bnd}) also holds. By taking $n$ large enough, $\beta_0$ can be assumed to be as close to $0$ as necessary, and we therefore have that
\[1-n^{-\kappa\beta_0/\log n}\sim\kappa\beta_0\quad\text{as}\quad n\rar\infty.\]
Now we have that
\[n^{\eta((1+\delta)(12/5+\epsilon_2)-\kappa)}=\beta_0^{c_2(\kappa/(1+\delta)-(12/5+\epsilon_2))},\]
and this will be $o(\beta_0)$ as $n\rar\infty$, provided that
\begin{equation}\label{eqn.kappa delta reln 3}
\kappa>(1+\delta)(12/5+\epsilon_2+c_2^{-1}).
\end{equation}
Since $\delta$ can be taken as close to $1$ as necessary and $\epsilon_2>0$ can be taken as small as necessary, we will be able to choose these parameters so that (\ref{eqn.kappa delta reln}) holds for all sufficiently large $n$, provided that $\kappa>39/5$. In this case what we have shown is that
\[\psi(x+h,a,n)-\psi (x,a,n)\gg\frac{h\beta_0}{\varphi (n)}\gg n^{\kappa-1-\epsilon_1}.\]
The contribution from higher powers of primes is bounded by
\[\sum_{p\le x+h}\sum_{\substack{m\ge 2\\p^m\le x+h}}\log p\ll (x+h)^{1/2}\log x\ll n^{k/2}\log n,\]
and this verifies the first part of the statement of Theorem \ref{thm.primes1}, for all $\kappa>39/5$, in the case when there is a Siegel zero (the constant $13.4$ comes from the non-Siegel zero case).

For Theorem \ref{thm.primes2} we take $h=n^{\kappa-1-\epsilon}$. In this case we assume that $\kappa>(3+\epsilon)\cdot 12/5$, let $\delta$ be any number satisfying (\ref{eqn.kappa delta reln 2}), and then choose $\epsilon_1$ in (\ref{eqn.sieg thm}) so that (\ref{eqn.h bound}) holds. The analysis is the same as before except that in equation (\ref{eqn.kappa delta reln 3}) we are allowed to take any value of $\delta>2+\epsilon$, and this verifies the first part of Theorem \ref{thm.primes2}, for all $\kappa>(3+\epsilon)\cdot 39/10$, in the case when there is a Siegel zero.

\noindent {\it Case 2:} If there are no Siegel zeros to modulus $n$ then much of the proof is the same as before, except that we must choose
\begin{equation*}
\eta =\frac{c_3}{\log (n(2+T))},
\end{equation*}
where the constant $c_3$ may be taken to be $0.10367$ (see \cite[p.267]{Heat1992}). Proceeding as before, we have that
\begin{align*}
&\psi(x+h,a,n)-\psi (x,a,n)\\
&\quad\ge \frac{h}{\varphi (n)}\left(1-\frac{\kappa}{\kappa-(1+\delta)(12/5+\epsilon_2)}\cdot n^{\eta((1+\delta)(12/5+\epsilon_2)-\kappa)}+o(1)\right),
\end{align*}
whenever (\ref{eqn.kappa delta reln}) is satisfied and
\begin{equation*}
n^{1+\kappa-\delta}(\log n)^2+n^{1+\kappa/4}=o(h)~\text{ as }~n\rar\infty.
\end{equation*}
Now we show that, for choices of $x$ and $h$ corresponding to each of our theorems, we can choose $\delta$ so that the above conditions are satisfied and such that
\[\frac{\kappa}{\kappa-(1+\delta)(12/5+\epsilon_2)}\cdot n^{\eta((1+\delta)(12/5+\epsilon_2)-\kappa)}<1.\]

As before, for the proof of Theorem \ref{thm.primes1} we have the freedom to choose $\delta>1$ to be as close to $1$ as we like, and we have the freedom to choose $\epsilon_2>0$ to be as small as we like. Therefore we will obtain the desired result as long as
\[\frac{\kappa}{\kappa-24/5}\cdot n^{\eta(24/5-\kappa)}=\frac{\kappa}{\kappa-24/5}\cdot\exp (c_3(12/5-\kappa/2))<1.\]
The function on the left hand side decreases as $\kappa$ increases, and the inequality is satisfied for all $\kappa>13.4$.

For the proof of Theorem \ref{thm.primes2} we can choose $\delta>2+\epsilon$ and we will be able to obtain the required result provided that
\[\frac{\kappa}{\kappa-(3+\epsilon)\cdot 12/5}\cdot\exp (c_3(12/5-\kappa/(3+\epsilon)))<1.\]
When $\epsilon=0$ this inequality will be satisfied for all $\kappa>20.1$. Therefore for any $\kappa>20.1$, it is possible to choose $\epsilon>0$ small enough so that the inequality is satisfied.

\section{Proofs of Theorems \ref{thm.perpnts3.1} and \ref{thm.perpnts3.2}}\label{sec.conclusion}
In this final section we will explore the limits of the $\F$-method in constructing automorphisms with various logarithmic growth rates of periodic points. We begin by demonstrating why certain growth rates are not possible. Assume that we naively apply the $\F$-method with all of the integers $p_n$ greater than $1$. Then, since each $p_n=1\mo n$,
\[\log F_n(T)=\sum_{d|n}\log p_d>\sum_{d|n}\log d=\frac{d(n)\log n}{2}.\]
It is well known (see \cite[Theorem 317]{HardWrig2008}) that
\[d(n)>\exp\left(\frac{\log n}{2\log\log n}\right)~\text{for infinitely many}~n\in\N,\]
and this establishes a lower bound (albeit on a very thin subsequence) for the growth of $F_n(T)$. Note that as $n\rar\infty$ the function
\[\exp\left(\frac{\log n}{2\log\log n}\right)\]
grows more quickly than any power of $\log n$.

This example is taking advantage of the fact that there is a thin set of integers which have an unusually large number of divisors. We may hope to taper the growth of $F_n(T)$ by cleverly employing the $\F$-method with some of the $p_n$ equal to $1$. However it is not difficult to prove that even in this case there are arbitrarily large growth rates which are not attainable.
\begin{proof}[Proof of Theorem \ref{thm.perpnts3.1}]
Using notation as in the statement of the Theorem, and assuming without loss of generality that $t(n)\rar\infty$ as $n\rar\infty$, define $r:\N\rar\R$ by
\begin{align*}
r(n)=\begin{cases} t(n^2)^3&\text{if $n$ is prime},\\ t(n)^2&\text{else.}\end{cases}
\end{align*}
If, for some $T$ constructed using the $\F$-method, $\log F_n(T)$ were asymptotic to $r(n)$ then we would have, for all large enough integers $n$, that
\[r(n)/2 < \log F_n(T) < 2r(n).\]
Suppose this is the case, let $q_1$ and $q_2$ be two large enough primes, and let $n=q_1q_2$. Then, using formula (\ref{eqn.F_n formula}),
\begin{align*}
2t(n)^2&> \log F_n(T)= \log F_{q_1}(T) + \log F_{q_2}(T) + \log p_n\\
&> (1/2)\left(t(q_1^2)^3+t(q_2^2)^3\right)> t(n)^3,
\end{align*}
which is impossible as $n\rar\infty.$
\end{proof}
On the extreme of slowly growing numbers of periodic points, it is certainly not possible using the $\F$-method to obtain polynomial (or sub-polynomial) growth. To see why, suppose that
\[\lim_{n\rar\infty}\frac{\log F_n(T)}{k\log n}=1\]
for some $k\in\N$. Then for all large enough $n$ there is a number $\xi_n\in (-1,1)$ such that
\[\log F_n(T)=(1+\xi_n(8k)^{-1})k\log n.\]
If $q$ is a prime which is large enough and if $a\in\N$ then we have
\begin{align*}
\log p_{q^{a+1}}&=\log F_{q^{a+1}}(T)-\log F_{q^a}(T)\\
&=k\log q+\frac{((a+1)\xi_{q^{a+1}}+a\xi_{q^a})}{8}~\log q.
\end{align*}
Setting $a=2k-1$, we have the bound
\[\left|\frac{((a+1)\xi_{q^{a+1}}+a\xi_{q^a})}{8}~\log q\right|<(k/2)\log q,\]
which implies that
\[q^{k/2}<p_{q^{2k}}<q^{3k/2}.\]
However this is impossible, since we require that $p_{q^{2k}}=1\mo q^{2k}$.

In contrast to the above argument, we conclude with our proof of Theorem \ref{thm.perpnts3.2}, in which we explain how to construct automorphisms with growth rates of periodic points only marginally asymptotically faster than polynomial.
\begin{proof}[Proof of Theorem \ref{thm.perpnts3.2}]
With notation as in the statement of the Theorem, let
\[r(n)=\iota (n)\log n.\]
Taking $\kappa=13.5$ in Theorem \ref{thm.primes1}, it follows that there is a constant $c>1$ with the property that, for any $n\in\N$ and for any $x>cn^\kappa$, there is a prime number in the interval $[x,2x)$ which is $1$ modulo $n$. We apply the $\F$-method to choose the integers $p_n$ as follows.

If $n$ is not a prime power then we set $p_n=1$. For each prime $q$ and for each $a\in\N$ we choose $p_{q^a}$ so that
\begin{equation}\label{eqn.p_q^a ineq}
\left|\sum_{i=1}^a\log p_{q^i}-r(q^a)\right|\le\log (cq^{a\kappa}).
\end{equation}
To verify that this is possible we argue for each prime $q$ by induction on $a$. First of all if
\[r(q)\le\log (cq^{\kappa}),\]
then we take $p_q=1$, otherwise we choose $p_q$ to be a prime equal to $1\mo q$ which lies in the interval
\[\left[\exp(r(q)),2\exp(r(q))\right).\]
In the second case we have that
\[\left|\log p_q-r(q)\right|\le \log 2,\]
so (\ref{eqn.p_q^a ineq}) is satisfied with $a=1$. Now suppose the inequality holds for some $a\in\N$. Then if
\begin{align*}
\left|\sum_{i=1}^a\log p_{q^i}-r(q^{a+1})\right|\le \log (cq^{(a+1)\kappa}),
\end{align*}
we take $p_{q^{a+1}}=1$. Otherwise (since $r$ is non-decreasing and the inequality was assumed to be true for $a$) it must the case that
\[\delta_{a+1}:=r(q^{a+1})-\sum_{i=1}^a\log p_{q^i}>\log (cq^{(a+1)\kappa}),\]
and we may choose $p_{q^{a+1}}$ to be a prime equal to $1\mo q$ in the interval $[\delta_{a+1},2\delta_{a+1})$. Then as before we have that
\[\left|\sum_{i=1}^{a+1}\log p_{q^i}-r(q^{a+1})\right|\le |\delta_{a+1}-\log p_{q^{a+1}}|<\log 2,\]
which verifies (\ref{eqn.p_q^a ineq}).

Finally, suppose that $n\in\N$ and write its prime factorization as $n=q_1^{a_1}\cdots q_k^{a_k}$. Then we have that
\begin{align*}
\log F_n&=\sum_{d|n}\log p_d\\
&=\sum_{i=1}^k\sum_{a=1}^{a_i}\log p_{q_i^a}\\
&=\sum_{i=1}^k\left(r(q_i^{a_i})+O(\log (q_i^{a_i}))\right)\\
&=\sum_{i=1}^k r(q_i^{a_i})+O(\log n).
\end{align*}
Letting $\mathcal{L}(n):=\log\log n$, we have that
\begin{align}
\left|\iota (n)\log n-\sum_{i=1}^k r(q_i^{a_i})\right|&=\sum_{i=1}^k (\iota (n)-\iota (q_i^{a_i}))\log (q_i^{a_i})\nonumber\\
&\le\sum_{\substack{i=1\\q_i^{a_i}>\mc{L}(n)}}^k2\log (q_i^{a_i})+\sum_{\substack{i=1\\q_i^{a_i}\le\mc{L}(n)}}^k\iota (n)\log (q_i^{a_i}),\label{eqn.pf of thm perpnts3.2}
\end{align}
where we have used the inequalities $|\iota (n)-\iota (q_i^{a_i})|<2$ for $q_i^{a_i}>\mc{L}(n)$, and $|\iota (n)-\iota (q_i^{a_i})|<\iota (n)$ otherwise. The first sum on the right hand side is bounded above by $2\log n$. For the second we use the Prime Number Theorem (or even Chebyshev's Theorem) to obtain the bound
\[\sum_{\substack{i=1\\q_i^{a_i}\le\mc{L}(n)}}^k\iota (n)\log (q_i^{a_i})\ll\iota (n)\log\log n.\]
Therefore the right hand side of (\ref{eqn.pf of thm perpnts3.2}) is $o(\iota (n)\log n)$, and the proof is completed.
\end{proof}

\end{document}